\documentclass{amsart}
\pdfoutput=1
\usepackage[longtable]{multirow}
\usepackage{graphicx}
\usepackage{longtable}
\usepackage{tikz-cd}
\usepackage{chngcntr}
\usepackage{fullpage}
\usepackage{amsmath}
\usepackage{amsthm}
\usepackage{amsfonts}
\usepackage{mathtools}
\usepackage{caption}

\usepackage{amssymb}
\theoremstyle{theorem}
\newtheorem{theorem}{Theorem}
\newtheorem{cor}{Corollary}

\newtheorem{prop}{Proposition}

\newtheorem*{definition}{Definition}
\theoremstyle{definition}
   \newtheorem*{remark}{Remark}

\renewcommand{\arraystretch}{1.2}

\usepackage{rotating}
\usepackage{accents}
\title{Periods of singular double octic Calabi-Yau threefolds\\and 
modular forms}
\subjclass[2010]{Primary 14J32; 
	Secondary 11F67, 14D05}

\keywords{Modular forms, Calabi-Yau threefold, L-function, period integral}

\author{Tymoteusz Chmiel}
\address{Institute of Mathematics, Jagiellonian University,
ul. {\L}ojasiewicza 6,
30-348 Krak\'ow,
Poland}
\email{tymoteusz.chmiel@doctoral.uj.edu.pl}
\author{S\l{}awomir Cynk}
\address{Institute of Mathematics, Jagiellonian University,
	ul. {\L}ojasiewicza 6,
	30-348 Krak\'ow,
	Poland}
\email{slawomir.cynk@uj.edu.pl}
\thanks{The second author was partially supported by the National Science Center grant no.  2020/39/B/ST1/03358. }

\begin{document}
\maketitle

\begin{abstract}
By the modularity theorem every rigid Calabi-Yau threefold $X$ has associated modular form $f$ such that the equality of $L$-functions $L(X,s)=L(f,s)$ holds. In this case period integrals of $X$ are expected to be expressible in terms of the special values $L(f,1)$ and $L(f,2)$. We propose a similar interpretation of period integrals of a nodal model of $X$. It is given in terms of certain variants of a Mellin transform of $f$. We provide numerical evidence towards this interpretation based on a case of double octics.
\end{abstract}

\section{Introduction}\label{sec:1}

A \textit{Calabi-Yau threefold} is a smooth complex projective variety of 
dimension $3$ such that \[\Omega^{3}_X\simeq\mathcal{O}_X\text{ and } H^1(X,\mathcal{O}_X)=0.\] 
In particular, there exists a non-vanishing holomorphic $3$-form $\omega$ on $X$, unique up to a constant. \textit{Period integrals} of a Calabi-Yau threefold $X$ are integrals of $3$-form $\omega$ over integral $3$-cycles. We shall denote by $\Lambda _{X}$ the period lattice of $X$, i.e.:
\begin{eqnarray*}
&&\Lambda_X:=\left\{\int_{\gamma}\omega:\;\gamma\in H_{3}(X,\mathbb{Z})\right\} \subset \mathbb{C}
\end{eqnarray*}
As the canonical form $\omega$ is defined up to a constant factor, the lattice $\Lambda_X$ is also defined only up to rescaling.
If the Calabi-Yau threefold $X$ is rigid, then $\Lambda _{X}$ 
defines an elliptic curve 
\[J^{2}(X):=H^{3,0}(X)^{*}/H_{3}(X,\mathbb Z)=\mathbb C/\Lambda_X\]
which is a particular case of the Griffiths \emph{intermediate Jacobian} \cite{Griff}.

Period integrals were computed only for a small number of Calabi-Yau threefolds. In \cite{Cynk-van Straten}  approximations of period integrals of eleven double octics were established by a numeric integration 
using a very explicit description of the geometry of these varieties. These computations give strong numerical evidence of the proportionality between period integrals of a rigid Calabi-Yau threefold and special values of the $L$-function of the corresponding cusp form as predicted by the Tate conjecture.

In \cite{Chmiel} we proposed a different approach for computing period integrals. If a rigid Calabi-Yau threefold $X$ is a resolution of singularities of a singular element $X_{t_0}$ of a one-parameter family $\mathcal X=(X_{t})_{t\in\mathbb{C}}$, then we can compute periods of $X$ as limits of certain period integrals of smooth elements 
of the family $\mathcal X$. Since period integrals of Calabi-Yau threefolds $X_{t}$ satisfy the Picard-Fuchs equation, we can use this differential equation, especially its monodromy, to determine periods of $X$.
This approach has two important advantages. Firstly, it enables computation of much better approximations of periods. Secondly, it depends only on the Picard-Fuchs operator and does not require any knowledge of the geometry of the considered variety.

In fact the approach based on the Picard-Fuchs operator computes periods of a singular variety $X_{t_0}$ rather than only those of the rigid Calabi-Yau threefold $X$. In general, the group of period integrals of $X_{t_0}$ has rank 3, while the group of periods of $X$ has rank 2. In \cite{Chmiel} we verified that the additional periods computed using the monodromy of the Picard-Fuchs equation agree with additional integrals computed in \cite{Cynk-van Straten} for polyhedral cycles in the nodal model $X_{t_0}$ that do not lift to a cycle in $X$.

The main goal of this paper is to propose an interpretation of the period integrals of the singular fiber $X_{t_0}$. As showed in \cite{Cynk-van Straten}, period  integrals of its smooth, birational model $X$ are proportional to the special values of the $L$-function of the modular form $f$ associated with the rigid Calabi-Yau threefold $X$ by the modularity theorem. We provide numerical evidence for a similar proportionality between periods of $X_{t_0}$ and certain partial integrals $M(f,k)$, which appear naturally in the classical proof of the functional equation for $L(f,s)$. As a consequence, we get strong evidence that 
the period integrals of $X_{t_0}$ are also determined by the cusp form attached to $X$.

If a rigid Calabi-Yau threefold $X$ has models $X_1,X_2$ isomorphic over $\mathbb C$ but not over $\mathbb Q$, then the cusp forms $f_1,f_2$ attached to these models can also differ. To remedy this ambiguity of the cusp form, we usually consider the twist of a minimal level. However, the Picard-Fuchs operators of $\mathbb{C}$-isomorphic varieties coincide and so they do not distinguish between different models. Since for different twists of a fixed modular form $f$ the integrals $M(f_1,k),\ M(f_2,k)$ demonstrate no proportionality (at least on a numerical base with high accuracy), we believe that the period integrals of the singular variety $X_{t_0}$ determine a 'preferred' modular form in a more canonical way.

The paper is organized as follows. In section \ref{sec:CY} we introduce basic definitions concerning Calabi-Yau threefolds and their period integrals. We also describe double octics which are our main source of examples. In section \ref{sec:one-p} we proceed to the case of one-dimensional families and associated differential operators. Section \ref{sec:twists} provides necessary information on twists of a modular form $f$ and in section \ref{sec:M(f,k)} we define the partial integrals $M(f,s)$ associated with $f$. They are supposed to provide a tool for understanding integrals of singular double octic Calabi-Yau threefolds. Finally, in section \ref{sec:interpretation} we present numerical evidence towards this connection.

\section{Periods of rigid double octic Calabi-Yau threefolds}\label{sec:CY}

The Bogomolov-Tian-Todorov unobstructedness theorem implies that the universal deformation space of a Calabi-Yau threefold $X$ is a smooth manifold of dimension $h^{2,1}(X)$.  In particular, a Calabi-Yau threefold $X$ is \textit{rigid}, i.e. admits no deformations of the complex structure, exactly when $h^{2,1}(X)=0$. 
Similarly, a Calabi-Yau threefold has one-parameter universal deformation 
space if and only if $h^{2,1}(X)=1$.

By the Hodge decomposition for all Calabi-Yau manifolds $X$ we have the following equality:
\[b_{3}(X)=2h^{2,1}(X)+2\]
Consequently, for a rigid Calabi-Yau threefold 
the group $H_{3}(X,\mathbb Z)$ has rank 2. Fixing a (non-zero) canonical form $\omega\in H^{3,0}(X)$ we define the \emph{period lattice} of $X$ to be
\[\Lambda _{X}:=\left\{\int_{\gamma}\omega: \gamma\in H_{3}(X,\mathbb Z)\right\}.\]

Our paper was motivated by a phenomenon exhibited by numerical computations of period integrals of rigid double octic Calabi-Yau threefolds. 
A \textit{double octic} is a Calabi-Yau threefold obtained as a resolution of singularities of a double cover of $\mathbb{P}^3$ branched along a union of eight planes $D\subset\mathbb P^{3}$. Double octics defined over $\mathbb{Q}$ with the Hodge number $h^{2,1}\leq1$ were completely classified in \cite{Cynk-Kocel}.
Among them there are eleven rigid double octics defined over $\mathbb Q$.

Every rigid Calabi-Yau threefold defined over $\mathbb Q$ is modular. More  precisely the following modularity theorem from \cite{Goueva-Yui} is a consequence of the Serre Conjecture proven by Khare and Wintenberger:
\begin{theorem}\label{th:mod}
	Let $X$ be a rigid Calabi-Yau threefold defined over $\mathbb{Q}$. Then there exists an integer $N$ and a Hecke eigenform  $f\in\mathcal{S}_4(\Gamma_0(N))$ such that $L(X,s)=L(f,s)$.
\end{theorem}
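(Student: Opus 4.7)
The plan is to produce a compatible system of two-dimensional $\ell$-adic Galois representations from the cohomology of $X$ and then appeal to the recently established Serre Conjecture together with modularity lifting theorems to show that this system is attached to a cusp form of weight $4$. The starting point is $\ell$-adic \'etale cohomology: for each prime $\ell$, set $V_\ell := H^3_{\text{\'et}}(X_{\overline{\mathbb{Q}}}, \mathbb{Q}_\ell)$. Rigidity of $X$ forces $b_3(X) = 2h^{2,1}(X) + 2 = 2$, so the continuous Galois representation $\rho_\ell : G_\mathbb{Q} \to \mathrm{GL}(V_\ell) \cong \mathrm{GL}_2(\mathbb{Q}_\ell)$ is two-dimensional. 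Smooth and proper base change show $\rho_\ell$ is unramified outside the primes of bad reduction of $X$ together with $\ell$, and by Deligne's purity results its characteristic polynomials at good primes $p$ have integer coefficients and Weil bound $p^{3/2}$. The $L$-function of the motive $h^3(X)$ is therefore $L(V_\ell, s)$, which by definition equals $L(X,s)$.

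Next I would verify the hypotheses needed to apply the modularity machinery. Complex conjugation acts on $H^3(X, \mathbb{R})$ with trace zero, because the Hodge decomposition is concentrated in types $(3,0)$ and $(0,3)$ which are interchanged by complex conjugation; hence $\det \rho_\ell(c) = -1$, so $\rho_\ell$ is odd. By comparison with de Rham cohomology, $\rho_\ell$ is de Rham at $\ell$ with Hodge--Tate weights $\{0,3\}$, corresponding to a motive of weight $3$; this is precisely the Hodge--Tate pattern of a weight-$4$ newform (weights $0$ and $k-1$ with $k=4$). Finally, one checks that the residual representation $\bar\rho_\ell$ is absolutely irreducible for a density-one set of primes $\ell$, using the Weil bounds to rule out a one-dimensional subrepresentation.

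Once these ingredients are in place, the Khare--Wintenberger proof of Serre's Conjecture supplies, for suitable $\ell$, a cuspidal eigenform $g$ of some weight and level with associated mod-$\ell$ representation isomorphic to $\bar\rho_\ell$; the weight and level predictions of Serre (also now theorems) pin down the expected invariants to weight $4$ and trivial nebentypus, with level dividing a specific integer $N$ built from the conductors of $\rho_\ell$ at bad primes. An $R=T$-type modularity lifting theorem (Wiles, Taylor--Wiles, with refinements by Kisin and Diamond that cover the crystalline/potentially semistable weight-$4$ case needed here) then lifts this residual modularity to show that $\rho_\ell$ itself is isomorphic to the $\ell$-adic representation attached to a Hecke eigenform $f \in \mathcal{S}_4(\Gamma_0(N))$. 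Equality of Euler factors at every good prime $p$ yields $L(X,s) = L(f,s)$; the remaining bad-prime factors match by comparison of conductors and local Langlands for $\mathrm{GL}_2$.

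The main obstacle here is not any step one performs by hand but the invocation of the Khare--Wintenberger theorem and a modularity lifting theorem applicable in the crystalline weight-$4$ setting with the precise residual hypotheses present: verifying that some $\ell$ satisfies simultaneously (i) $\bar\rho_\ell$ absolutely irreducible, (ii) $\bar\rho_\ell\!\restriction_{G_{\mathbb{Q}(\zeta_\ell)}}$ absolutely irreducible (the ``big image'' condition needed for Taylor--Wiles patching), and (iii) appropriate local conditions at $\ell$ and at primes of bad reduction. The standard argument, as carried out in \cite{Goueva-Yui}, shows that such $\ell$ always exist for a rigid Calabi-Yau threefold over $\mathbb{Q}$ by combining Weil bounds with the density results available after Khare--Wintenberger, which completes the proof.
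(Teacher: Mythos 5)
The paper does not prove this statement at all: it is quoted verbatim from the cited reference \cite{Goueva-Yui} as a known consequence of the Serre Conjecture (Khare--Wintenberger), so there is no in-paper argument to compare against. Your sketch is, in outline, the standard proof underlying that citation, and the main ingredients are correctly identified: $b_3(X)=2$ by rigidity, the two-dimensional compatible system $\rho_\ell$ on $H^3_{\text{\'et}}$, oddness (your argument via the trace of $F_\infty$ on $H^{3,0}\oplus H^{0,3}$ is fine, and is equivalent to the usual $\det\rho_\ell=\chi_\ell^3$ computation), Hodge--Tate weights $\{0,3\}$ matching weight $4$, and Serre's conjecture to obtain residual modularity with the predicted weight and level. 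The one place where you diverge from the argument of Gouv\^ea--Yui (following Dieulefait) is the final lifting step: they do not invoke an $R=T$ modularity lifting theorem. Instead they use the compatible-system trick --- the residual representations are modular of bounded weight and level, finiteness of the set of such newforms produces a single eigenform $f$ working for infinitely many $\ell$, and agreement of integer traces modulo infinitely many $\ell$ forces $\operatorname{tr}\rho_\ell(\mathrm{Frob}_p)=a_p(f)$ on the nose. That route buys you freedom from the Taylor--Wiles ``big image'' and local-at-$\ell$ hypotheses you flag as the main obstacle; your route via Kisin-style lifting theorems also works but imports exactly the technical conditions you worry about, and your treatment of residual reducibility (``Weil bounds rule out a one-dimensional subrepresentation'') is too quick as stated --- a single prime's Weil bound does not preclude a reducible reduction mod $\ell$; one needs the compatibility across primes to exclude residual reducibility for all but finitely many $\ell$. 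Since both routes are in the literature and you correctly defer the hard theorems to their sources, the proposal is acceptable as a proof sketch of a result the paper itself only cites.
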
	
\noindent The level $N$ of the eigenform $f$ equals the product $\prod_{p}p^{e(p)}$ taken over the set of bad primes with $e(2)\le8$, $e(3)\le5$ and $e(p)\le2$ for $p\ge5$.
Modular forms for rigid double octics have been computed in \cite{Meyer}. Throughout this paper we shall use the numbering of double octics and modular forms introduced in this book. 

The modular form $f$ corresponding to a rigid Calabi-Yau threefold $X$ can be seen as a $2$-form on the associated Kuga-Sato variety $Y$, and the special values $L(f,1)$ and $\tfrac{L(f,2)}{2\pi i}$ as its periods. By the Tate conjecture the 
equality of $L$-functions $L(X,s)=L(f,s)$ should imply the existence of a correspondence between $X$ and $Y$. Consequently, we expect the lattices $\Lambda_X$ and \[\Lambda_f:=(2\pi i)^2L(f,1)\mathbb{Z}\oplus(2\pi i)L(f,2)\mathbb{Z}\] to be commensurable.

Numerical approximations of certain sublattices of  $\Lambda_X$ for rigid double octics were first computed in \cite{Cynk-van Straten}.
If a double octic $X$ is defined as a resolution of singularities $\sigma:X\longrightarrow \overline{X}$ of a double covering $\pi:\overline{X}\longrightarrow\mathbb P^{3}$ branched along a union of eight planes $D=P_{1}\cup\dots\cup P_{8}$, then planes $P_{i}$ define a decomposition of $\mathbb P^{3}(\mathbb R)$ into polyhedra. The double cover $\tilde C:=\pi^{-1}(C)\longrightarrow C$ of a polyhedron $C$ from this partition defines a 3-cycle in $H_{3}(\overline{X},\mathbb Z)$ called \emph{a polyhedral cycle}.

However, not every polyhedral cycle lifts to a cycle on the resolution of singularities $X$ of $\overline{X}$. Arrangements of eight planes that define  Calabi-Yau threefolds have eight possible types of singularities. A 3-cycle in $H_{3}(\overline{X},\mathbb Z)$ lifts to a 3-cycle in $H_{3}(X,\mathbb Z)$ if and only if it satisfies a symmetry condition at singular points of type $p_{4}^{0}$. A singularity of type $p_{4}^{0}$ is a point of intersection of four planes $P_{i}$, which are generic elsewhere 
(i.e. this point does not lie on a triple line).

Let $F(x,y,z,w)$ be the homogeneous equation of the octic arrangement $D$. Numerical integration of 
\[\iiint_{C}\frac{dxdydz}{\sqrt{F(x,y,z,1)}},\]
over all polyhedra $C$ gives period integrals of $\overline{X}$. Computed integrals generate a subgroup $\Lambda^{p}_{\overline X}$ of the group of period integrals of the singular double cover $\overline X$ of $\mathbb P^{3}$: 
\[\Lambda _{\overline{X}}:=\left\{\int_{\gamma}\omega: \gamma\in H_{3}(\overline{X},\mathbb 
Z)\right\}\]
In the following table we give real and complex generators of this subgroup. Note that for arrangements 19, 240 and 245 the group $\Lambda^{p}_{\overline{X}}$  is of rank $4$. This phenomenon is possible because the computations in \cite{Cynk-van Straten} are carried out on the singular double octic, which contains in these cases several $p_{4}^{0}$. Each $p_{4}^{0}$ point yields a condition on a cycle in $H_{3}(\overline X,\mathbb Z)$ to lift to a cycle in $H_{3}(X,\mathbb Z)$. If some $p_{4}^{0}$ points define independent conditions, the rank of $\Lambda^{p}_{\overline{X}}$ can be larger than 3.

	\begin{center}
	\begin{tabular}{r|rr|rr}
		Arr. &\multicolumn{2}{l|}{Real integrals}&
		\multicolumn{2}{l}{Imaginary integrals}\\\hline
		3&14.303841078& 18.695683053&
		41.413458745i\\
		19&12.3280533145& 19.3301891966&
		12.3280533145i& 19.3301891966i\\
		32&11.13352966& 16.85672240&
		17.34237466i\\
		69&11.13352966& 16.85672240&
		17.34237465i\\
		93&8.42836120319& 11.1335296603&
		17.3423746625i\\
		239&13.1823084825& 17.6714531944&
		11.7425210928i\\
		240&3.99263311132& 6.94406875218&
		4.80390756451i& 6.9176905115i\\
		245&3.99263311132& 6.94406875217&
		5.38024923409i& 7.49403218155i
	\end{tabular}
	\captionof{table}{Generators of $\textnormal{Re}\left(\Lambda^{p}_{\overline{X}}\right)$ and $\textnormal{Im}\left(\Lambda^{p}_{\overline{X}}\right)$ for rigid double octics with $\Lambda_{X}\subsetneq\Lambda^{p}_{\overline{X}}$}\label{tab:integrals}
\end{center}

\section{One-parameter families}\label{sec:one-p}

The main idea behind the method of computing period integrals introduced in \cite{Chmiel} is to consider a rigid Calabi-Yau manifold $X$ as a resolution of a degenerate element $X_{t_0}$ 
of a one-parameter family of smooth Calabi-Yau threefolds $\mathcal X = (X_t)_{t\in \mathcal B}$, where $\mathcal B=\mathbb P^{1}(\mathbb C)\setminus \Sigma$ is an open subset of $\mathbb P^{1}(\mathbb C)$.

As an explicit example let us begin with the case of double octics, which is our main interest in this paper. In this case the family $\mathcal X$ of Calabi-Yau threefolds is given by the corresponding family $D_{t}=\{(x,y,z,w)\in\mathbb P^{3}: F_{t}(x,y,z,w)=0\}$ of arrangements of eight planes inside $\mathbb{P}^3$. Then the Calabi-Yau threefold $X_{t}$ is a desingularization of the double cover $\overline{X_{t}}$ of $\mathbb P^{3}$ branched along $D_{t}$. A degeneration $X_{t_{0}}$ is an effect of an additional collision of planes in $D_{t_{0}}$ that does not happen in a generic fiber $D_{t}$. The sequence of blow-ups that resolves generic double cover $\overline{X_{t}}$ gives only 
a partial resolution $X_{t_{0}}$ of the double cover $\overline{X_{t_{0}}}$ in the degenerate fiber.

To be even more explicit, assume that four planes of the octic arrangement form a tetrahedron that shrinks to a point as $t\rightarrow t_{0}=0$. In appropriate coordinates we get \[F_{t}(x,y,z,w)=xyz(x+y+z-tw)G_{t}(x,y,z,w),\] with $G_{0}(0,0,0,1)\not =0$. Then the singular fiber is nodal and the shrinking tetrahedron \[\{(x,y,z)\in\mathbb R^{3}: x\ge0,\ y\ge0,\ z\ge0,\ x+y+z\le t\}\] defines the \emph{vanishing cycle} $\delta\in H_{3}(X_{t},\mathbb Z)$.

The difference between the resolution $X$ of the double cover $\overline{X_{t_{0}}}$ and its partial resolution $X_{t_{0}}$ is that in the former case we first blow-up the $p_{4}^{0}$ points and then the double lines, while in the latter we blow-up only the double lines (see \cite{Cynk-van Straten} for details). As a consequence, the variety 
$X_{t_{0}}$ is nodal with two nodes corresponding to each $p_{4}^{0}$ point. These nodes admit a small (crepant) resolution and exceptional lines are equivalent in $H^{4}(X,\mathbb C)$. In this situation we have $b_3(X_t)=4$, $b_3(X_{t_0})=3$ and $b_3(X)=2$. The homology group $H_3(X_t,\mathbb{Z})$ is spanned by $H_3(X_{t_0},\mathbb{Z})$ and the class of the vanishing cycle $\delta$.

Now let us for a moment return to a general situation of a family $\mathcal X = (X_t)_{t\in \mathcal B}$. If we fix a holomorphic family of $3$-forms $\omega_t\in H^{3,0}(X_{t})$ and a cycle $\delta\in H_3(X_{t_0},\mathbb{Z})$, in a punctured neighbourhood of ${t_0}\in\Sigma$ we can consider a (locally) holomorphic function $y(t):=\int_\delta\omega_t$, called the \textit{period function} of this family. For any loop $\gamma\in\pi_1(\mathcal{B},b)$, where $\mathcal{X}\rightarrow\mathcal{B}$ is the total space of the family and $b\in\mathcal{B}$ is some base point close to ${t_0}$, we can continue $y$ analytically along $\gamma$ and obtain a new function which we denote $M_\gamma(y)$. It turns out that the periods of $X$ 
can be recovered from the values $M_\gamma(y)({t_0})$.

A period function $y$ satisfies a fourth order differential equation called the \textit{Picard-Fuchs operator} of the family $X_t$ (see, e.g. \cite{Gross-Huybrechts-Joyce}). For any regular point $b$ the space of solutions of $\mathcal{P}=0$ near $b$ is four-dimensional and the fundamental group $\pi_1(\mathcal{B},b)$ acts on it by analytical continuation. After a choice of basis, this action defines the \textit{monodromy group} $\text{Mon}(\mathcal{P})\subset GL(4,\mathbb{C})$ of the operator $\mathcal{P}$. Every boundary point $s\in\mathcal{B}$ of the family $\mathcal X$ has associated local monodromy operator $M_s\in\text{Mon}(\mathcal{P})$, given by a small loop encircling $s$ counter-clockwise. If the local monodromy $M_s$ has the Jordan form
\begin{equation*}
	\begin{pmatrix}
		1&0&0&0\\0&1&1&0\\0&0&1&0\\0&0&0&1\\
	\end{pmatrix},
\end{equation*}
the singular point $s$ is called a \textit{conifold point}. Local exponents of a Picard-Fuchs operator at a conifold point equal $(0,1,1,2)$. 
By direct inspection we verify that the local exponents $(0,1,1,2)$ of the Picard-Fuchs operators of one-parameter families of double octics correspond exactly to a degeneration of the described type (introducing a new $p_{4}^{0}$ from a shrinking tetrahedron).

\begin{remark}
	Picard-Fuchs operators of a one-parameter family of double-octic Calabi-Yau threefolds have also singular points with local exponents $(0,\tfrac{1}{2},\tfrac{1}{2},1)$ and $(0,\tfrac{1}{4},\tfrac{1}{4},\tfrac{1}{2})$. The geometry of the degenerate fiber in this case can be more complicated. After a quadratic or quartic base-change, totally ramified at such singularity, we can get an equation with local exponents $(0,1,1,2)$. 
	As a consequence, the monodromy behaviour in this case is much better understood (see \cite{Chmiel2}). On the geometric side, we expect that a singular point of type $\frac1nC$, i.e. with local exponents $(0,\tfrac{1}{n},\tfrac{1}{n},\tfrac{2}{n})$, corresponds to a nodal Calabi-Yau threefold after a semi-stable base-change. Here for simplicity we focus on singularities of type $C$.
\end{remark}

The main advantage of the approach via Picard-Fuchs operators is that we do not depend on a geometric description of the considered rigid Calabi-Yau threefold. Assume that $\mathcal{P}$ is a Fuchsian differential operator of order $4$ such that $t_0$ is a conifold singularity. The image $\operatorname{Im}(M_{t_0}-\operatorname{Id})$ is one-dimensional. A generator of this subspace is called the \textit{conifold period} and we denote it by $f_c$; in the case of Picard-Fuchs operators it corresponds to the integral over the vanishing cycle.

Now we define
\begin{equation}{\label{eq:def}}
	\mathcal{L}_{\mathcal{P},t_0}:=\Big\langle\big\{M(f_c)(t_0): M\in \text{Mon}(\mathcal{P})\big\}\Big\rangle.
\end{equation}
The group $\mathcal{L}_{\mathcal{P},t_0}$ is only defined up to scaling, since in the definition we have to choose a specific conifold period $f_c$. 
In order to avoid this ambiguity we therefore normalize it by choosing the conifold period satisfying the condition $f_c(t)=(t-t_0)+O\left((t-t_0)^2\right)$. The group $\mathcal{L}_{\mathcal{P},t_0}$ contains limits of periods integrals of fibers $X_{t}$ as $t\rightarrow t_{0}$. They are periods of the singular variety $X_{t_{0}}$ but not necessarily periods of $X$. Consequently we get only the inclusion $\mathcal{L}_{\mathcal{P},t_0}\subset \Lambda _{X_{t_0}}$.

Assume that $t_{0}\in\mathbb Q$ and that the Picard-Fuchs operator $\mathcal P$ of the family $\mathcal X$ has rational coefficients: \[\mathcal P=P_{4}(t)D^{4}+P_{3}(t)D^{3}+\dots+P_{0}(t),\quad P_{i}\in\mathbb Q[T]\]
In this situation the Frobenius basis of solutions of $P$ at $t_{0}$ has the form
$$f_{1}(t-t_{0}),\ f_{2}(t-t_{0}),\ f_{3}(t-t_{0})+f_{2}(t-t_{0})\cdot\log(t-t_{0}),\ f_{4}(t-t_{0})$$
with $f_{1}\in\mathbb Q[[T]], f_{2},f_{3}\in T\mathbb Q[[T]]$ and $f_{4}\in T^{2}\mathbb Q[[T]]$. Consequently the space 
$\mathcal L_{P,t_{0}}$ is invariant under complex conjugation and 
\[2\cdot\Big(\textnormal{Re}\left(\mathcal L_{\mathcal P,t_{0}}\right)\oplus \textnormal{Im}\left(\mathcal L_{\mathcal P,t_{0}}\right)i\Big)\subset\mathcal L_{\mathcal P,t_{0}}\subset\textnormal{Re}\left(\mathcal L_{\mathcal P,t_{0}}\right)\oplus \textnormal{Im}\left(\mathcal L_{\mathcal P,t_{0}}\right)i,\] 
where \[\textnormal{Re}\left(\mathcal L_{\mathcal P,t_{0}}\right):=\{\textnormal{Re}(v): v\in\mathcal L_{\mathcal P,t_{0}}\},\;\textnormal{Im}\left(\mathcal L_{\mathcal P,t_{0}}\right):=\{\textnormal{Im}(v): v\in\mathcal L_{\mathcal P,t_{0}}\}\]
denote the real and complex parts of $\mathcal{L}_{P,t_{0}}$.

In general, it is difficult to compute the monodromy group of a Picard-Fuchs operator. 
Assume that the family $\mathcal X$ has a point of Maximal Unipotent Monodromy (MUM) at $t=0$. A choice of a path connecting points 0 and $t_{0}$, while avoiding other singularities of the Picard-Fuchs operator $\mathcal P$, gives a subgroup 
\begin{equation}
	\mathcal L^{0}_{\mathcal P,t_{0}}:=\langle \{\textnormal{Re}(M_{0}^{n}): n\in\mathbb Z\}\rangle + \langle \{\textnormal{Im}(M_{0}^{n})i: n\in\mathbb Z\}\rangle \subset\mathcal L_{\mathcal P,t_{0}}
\end{equation}
defined by a local monodromy $M_{0}$ of $\mathcal P$ around the MUM point $t=0$. 

Coming back to the case of double octics, in \cite{Chmiel} we observed (numerically) that for all one-parameter families of double octics the group of real periods $\textnormal{Re}\left(\mathcal L^{0}_{P,t_{0}}\right)$ has rank one, while in nine cases the subgroup of imaginary periods $\textnormal{Im}\left(\mathcal L^{0}_{P,t_{0}}\right)$ has rank 2. Thus in this case the inclusion $\Lambda_X\subset\mathcal{L}^0_{\mathcal{P},t_0}$ is strict; note that due to results from \cite{Chmiel2} this cannot happen for a singularity of type $\tfrac{1}{2}C$.
Moreover, the generators of $\mathcal{L}^{0}_{\mathcal{P},t_0}$  (Table \ref{tab:monodromy}) and the generators of $\Lambda^{p}_{\overline{X}}$ (Table \ref{tab:integrals}) can be expressed in terms of each other. 

 \def\arraystretch{1.5}
\begin{longtable}{|c|c|c|c|c|}
	\hline
	Operator&Conifold point&Rigid Arr.&Form&Generators of $\textnormal{Im} \mathcal L^{0}_{P,t_{0}}$\\
	\hline
	\multirow{2}{*}5&\multirow{2}{*}0&\multirow{2}{*}3&\multirow{2}{*}{32/2}&3.78853747194184773010686231258i\\
	&&&&61.0738884585292464400038239965i\\\hline
	\multirow{2}{*}5&\multirow{2}{*}2&\multirow{2}{*}3&\multirow{2}{*}{32/2}&0.94713436798546193252671571987i\\
	&&&&3.90285969880676841968001329994i\\\hline
	\multirow{2}{*}{20}&\multirow{2}{*}{-2}&\multirow{2}{*}{19}&\multirow{2}{*}{32/1}&0.979278824715794481666000593885i\\
	&&&&4.45674355709313141111341743112i\\\hline
	\multirow{2}{*}{95}&\multirow{2}{*}{$-\tfrac12$}&\multirow{2}{*}{93}&\multirow{2}{*}{8/1}&
	2.99683078705084653614316487029i\\
	&&&&34.0238543159967756814545903982i \\\hline
	\multirow{2}{*}{244}&\multirow{2}{*}{$\tfrac12$}&\multirow{2}{*}{240}&\multirow{2}{*}{6/1}&
	2.58823590805561845768157001028i\\&&&&32.0498374325403392826453731746i\\\hline
	\multirow{2}{*}{244}&\multirow{2}{*}{2}&\multirow{2}{*}{240}&\multirow{2}{*}{6/1}&10.3529436322224738307260280020i\\*
	&&&&128.199349730161357130578977414i\\* \hline
	\multirow{2}{*}{253}&\multirow{2}{*}{-2}&\multirow{2}{*}{245}&\multirow{2}{*}{6/1}&
	6.26847094349121003359079492495i\\&&&&7.96011334055139325749281017005i\\\hline
	\multirow{2}{*}{274}&\multirow{2}{*}{$-\tfrac12$}&\multirow{2}{*}{245}&\multirow{2}{*}{6/1}&
	0.839792675513409448977564062085i\\&&&&8.12249522907253404678014309610i\\\hline
	\multirow{2}{*}{274}&\multirow{2}{*}{-2}&\multirow{2}{*}{245}&\multirow{2}{*}{6/1}&
	1.49296475661811062877124575360i\\&&&&14.4399915143798181025349619989i\\\hline
	\caption{Generators of $\mathcal{L}^{0}_{\mathcal{P},t_0}$ for singular points $t_{0}$ with $\operatorname{rank}\left(\mathcal{L}^{0}_{\mathcal{P},t_0}\right)=3$}
	\label{tab:monodromy}
\end{longtable}

Obviously the group $\mathcal{L}_{\mathcal{P},t_0}$ depends not only on the smooth double octic $X$ but also on the choice of a one-parameter smoothing. In fact birational models of rigid double octic can be realized as specializations of several one-parameter families. Up to commensurability, we always have inclusions $\Lambda_X\subset\mathcal{L}_{\mathcal{P},t_0}\subset\Lambda^{p}_{X_{t_0}}\subset\Lambda^{p}_{\overline{X}}$. As we already mentioned, there are cases when $\Lambda_X=\mathcal{L}^{0}_{\mathcal{P},t_0}$ but there are also examples in which $\text{rank}(\mathcal{L}_{\mathcal{P},t_0})=3$.
Additional period integrals in $\Lambda^{p}_{\overline{X}}$ are related to singular points of type $p_{4}^{0}$. The classification in \cite{Cynk-Kocel} shows that $\Lambda^{p}_{\overline{X}}$ is the sum of $\Lambda^{p}_{X_{t_0}}$ taken over all one parameter smoothings $X_{t}$ of $X$.

Results of \cite{Chmiel} have two important consequences: using 
Maple implementation of algorithms for solving differential equations and numerical approximations to construct an analytic continuation along any polyline path, we can compute the elements of $\mathcal{L}_{\mathcal{P},t_0}$ with precision of hundreds of digits. This is in striking contrast with computations in \cite{Cynk-van Straten}, where only precision of $10$ digits could be obtained. Moreover, the definition (\ref{eq:def}) is given purely in terms of the differential equation and thus allows us to assign an analogue of $\Lambda_{\overline{X}}$ to any smooth Calabi-Yau threefold $X$ which is birational to a degeneration of a family of Calabi-Yau threefolds $X_t$ with $h^{2,1}(X_t)=1$.
The question of understanding period integrals of singular models  $\overline{X}$ of a rigid Calabi-Yau threefold $X$ is therefore replaced with a more general problem of describing the elements of $\mathcal{L}_{\mathcal{P},t_0}$, where $\mathcal{P}$ is the Picard-Fuchs operator of a one-parameter smoothing of a singular model $X_{t_0}$ of $X$. We want to accomplish it in terms of the modular form associated to $X$ by the modularity theorem.

\section{Twists by a Dirichlet character}\label{sec:twists}

A rigid Calabi-Yau threefold can have models which are isomorphic over a number field but not isomorphic over $\mathbb Q$. In this situation the associated modular form is not uniquely determined by its model over complex numbers. 
Since a double octic Calabi-Yau threefold is hyperelliptic, it admits a quadratic twist by any square-free integer. Quadratic twists exist for a large class of Calabi-Yau threefolds including double octics and Schoen's fiber products. However, 
existence of quadratic twists for an arbitrary Calabi-Yau threefold is an 
open question.  In \cite{GKY} Gou\^eva, Kiming and Yui proposed an abstract definition of a quadratic twist.

To be more explicit, if a rigid double octic $X$ is given as a resolution of the hypersurface
\[\{u^{2}=f(x)\}\subset\mathbb P(1^{4},4),\]
then there exists a quadratic twist $X_{d}$ by a square-free integer $d$  given by a resolution of
\[\{u^{2}=d\cdot f(x)\}\subset\mathbb P(1^{4},4).\]
Threefolds $X$ and $X_{d}$ are obviously isomorphic over $\mathbb Q[\sqrt{d}]$, but they are not isomorphic over $\mathbb Q$, unless 
the corresponding modular form is of CM-type.

If $X$ is a rigid Calabi-Yau manifold defined over $\mathbb Q$ with attached modular form
\[f(z)=\sum _{n=1}^{\infty} a_{n}q^{n}\in\Gamma_{0}(N), \quad q=\exp(2\pi iz),\]
then the modular form associated to a quadratic twist $X_{d}$ by $d$ is
\[f_{\chi_{d}}(z)=\sum _{n=1}^{\infty} a_{n}\chi_{d}(n)q^{n}\in\Gamma_{0}(N), \quad q=\exp(2\pi iz).\]
Thus it is the quadratic twist of $f$ by the Dirichlet character $\chi_{d}$ (\cite[Thm.~1]{GKY}).

Let $g(\chi_{d})$ be the Gauss sum of a Dirichlet character $\chi_{d}$ modulo $d$:
\[g(\chi_{d}):=\sum_{a=1}^{d}\chi_{d}(a)\exp\left(\frac{2\pi ia}{d}\right)\]
If the character $\chi_{d}$ is primitive then $|g(\chi_{d})|=\sqrt{d}$. For a Dirichlet character $\chi$ denote by $K_\chi$ the field of definition of $\chi$.
We have the following formulas for the special values of a twist $f_{\chi}$ of the modular form $f$ by a Dirichlet character:
\begin{theorem}[\mbox{\cite[Thm. 1]{Shimura}}]
	Let $f\in S_{k}(\Gamma_{0}(N))$ be a cusp form of weight $k$ for the group $\Gamma_{0}(N)$ with rational coefficients. 	
	There exist complex numbers $u^{+}$ and $u^{-}$ such that 
	for any  Dirichlet character $\chi$ and any positive integer $m<k$ we have
	\[(2\pi i)^{-m}g(\chi)^{-1}L(f_{\chi},m)\in\begin{cases}
	u^{+}K_{\chi}, \; &\text{ if }\chi(-1)=(-1)^{m}\\
	u^{-}K_{\chi},  &\text{ if }\chi(-1)=-(-1)^{m}
	\end{cases}\]
\end{theorem}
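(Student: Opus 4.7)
The plan is to follow the classical Eichler--Shimura approach: reinterpret the twisted $L$-value as a period integral of $f$ against a polynomial, and then use the action of complex conjugation on parabolic cohomology to split the rationality statement into a $+$ part and a $-$ part governed by two intrinsic constants $u^{+}, u^{-}$ attached to $f$.

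First I would express $L(f_{\chi}, m)$ as a finite combination of period integrals of $f$. Using the standard expansion $f_{\chi}(z) = g(\bar\chi)^{-1}\sum_{a\bmod d}\bar\chi(a)\, f(z + a/d)$ together with the Mellin formula $\int_{0}^{i\infty} f(z) z^{s-1}\,dz = c\cdot (2\pi)^{-s}\Gamma(s) L(f,s)$ (for an explicit power of $i$), I obtain a relation of the shape
\[
\frac{L(f_{\chi}, m)}{(2\pi i)^{m}\, g(\chi)} \;=\; \sum_{a\bmod d} c_{a}(\chi)\,\int_{0}^{i\infty} f(z)\Bigl(z-\tfrac{a}{d}\Bigr)^{m-1}\,dz,
\]
with coefficients $c_{a}(\chi)\in K_{\chi}$. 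Hence the quantity on the left is, up to elements of $K_\chi$, a $K_{\chi}$-linear combination of the period integrals $r_{f}(P) := \int_{0}^{i\infty} f(z)\,P(z)\,dz$ taken on polynomials $P\in K_{\chi}[z]$ of degree $m-1$.

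Next I would invoke the Eichler--Shimura isomorphism. The linear functional $P\mapsto r_{f}(P)$ on the space of polynomials of degree at most $k-2$ extends to a parabolic cohomology class in $H^{1}_{\mathrm{par}}(\Gamma_{0}(N), V_{k-2}(\mathbb{C}))$. The involution induced by $z\mapsto -\bar z$ acts on this cohomology and decomposes the $f$-isotypic component into two one-dimensional eigenspaces with eigenvalues $\pm 1$; because $f$ has rational Fourier coefficients, these lines are defined over $\mathbb Q$. Fixing nonzero rational generators of the two lines produces constants $u^{+}$ and $u^{-}$ with the property that whenever $P\in K[z]$ for some number field $K$, the $\pm$-components of $r_{f}(P)$ lie in $u^{\pm}K$ respectively.

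The last step is the parity bookkeeping. Under $z\mapsto -\bar z$ together with $a\mapsto -a$, the integrand $\chi(a)(z-a/d)^{m-1}$ picks up a factor $\chi(-1)(-1)^{m-1}$, so the twisted combination lands in the $(+1)$-eigenspace precisely when $\chi(-1) = (-1)^{m}$ and in the $(-1)$-eigenspace otherwise; combined with the preceding step this yields the theorem. The main obstacle will be the careful tracking of normalisations: the powers of $2\pi i$, the Gauss-sum factors $g(\chi)$ and $g(\bar\chi)$, the denominator $d^{m}$, and, for imprimitive $\chi$, the Euler-factor corrections at primes dividing $\gcd(N,d)$. The deeper content, namely that $u^{\pm}$ do not depend on $\chi$, is essentially automatic once the setup is in place, since $u^{\pm}$ are defined intrinsically from the cohomology class attached to $f$ before any twist is performed.
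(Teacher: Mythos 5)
The paper gives no proof of this statement---it is quoted directly from Shimura's Theorem~1---and your sketch follows essentially the same route as Shimura's original argument: write the twisted $L$-value, via the Birch-type identity $f_{\chi}(z)=g(\bar\chi)^{-1}\sum_{a}\bar\chi(a)f(z+a/d)$ and the Mellin transform, as a $K_{\chi}$-linear combination of period integrals of $f$ against polynomials of degree $\le k-2$, and then use the rational structure and the $\pm$-eigenspace decomposition of the $f$-isotypic part of $H^{1}_{\mathrm{par}}(\Gamma_{0}(N),V_{k-2})$ under $z\mapsto-\bar z$ to produce the two constants $u^{+},u^{-}$. The only points to watch are that the one-dimensionality of those eigenlines (hence the existence of a single pair $u^{\pm}$ independent of $\chi$) relies on $f$ being a Hecke newform rather than an arbitrary cusp form with rational coefficients, and that the period integral arising from the term $f(z+a/d)$ runs from the cusp $a/d$ to $i\infty$ rather than from $0$, which is exactly why the parabolic-cohomology (modular symbol) formalism is needed to reduce it to the basic periods.
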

In the special case of a quadratic twist $X_{d}$ of a Calabi-Yau threefold and the attached modular forms $f$ and $f_{d}$ 
we get
\begin{eqnarray*}
	&&\text{If } d>0, \text{ then } L(1,f_{d})\in \sqrt{d}\cdot L(1,f)\cdot\mathbb Q, \quad L(2,f_{d})\in \sqrt{d}\cdot L(2,f)\cdot\mathbb Q\\
	&&\text{If } d<0, \text{ then } L(1,f_{d})\in \frac{\sqrt{d}}{2\pi i}\cdot L(2,f)\cdot\mathbb Q, \quad L(2,f_{d})\in \sqrt{d}\cdot 2\pi i\cdot L(1,f)\cdot\mathbb Q
\end{eqnarray*}
This formulas agree with the behaviour of period integrals of a double octic Calabi-Yau threefold under a quadratic twists, since the period integrals of $X_{d}$ equal period integrals of $X$ divided by $\sqrt{d}$. In particular, when $d$ is negative, real periods of $X$ correspond to complex periods 
of $X_{d}$ and vice versa.

As we mentioned in section \ref{sec:1}, period integrals of a rigid Calabi-Yau threefold $X$ defined over $\mathbb{Q}$ are expected to be proportional 
to special values of the $L$-function of the corresponding modular form and thus we want to interpret the elements of $\Lambda_{\overline{X}}$ in a similar way. 

However, we have to take into account that Calabi-Yau threefolds isomorphic over $\mathbb C$ need not have equal associated modular forms, as they can fail to be isomorphic over $\mathbb Q$. This phenomenon can occur also for one-parameter families of Calabi-Yau threefolds.
Consequently, given a differential operator $\mathcal{P}$, it may happen that we find two families $\mathcal{X}$ and $\mathcal{Y}$ having $\mathcal{P}$ as the Picard-Fuchs operator, yet such that the smooth, rigid models of singular fibers at a conifold point have associated modular forms equal only up to a twist. It is then not \textit{a priori} clear which modular form should the elements of $\Lambda_{\overline{X}}$ be compared with.

In a similar manner the distinction between different twists of the same manifold is not visible from the differential operator. The Picard-Fuchs operator $\mathcal{P}$ of a family $X_t^a$ given by the equation $x_0^2=aF_t(x_1,x_2,x_3,x_4)$ is independent of the choice of $a\in\mathbb{C}^*$. Indeed, the preferred choices of the period function $$\omega_t^a=\int_\gamma\tfrac{\sum_{i=1}^{4}(-1)^ix_idx_1\wedge\cdots\wedge\widehat{dx_i}\wedge\cdots dx_4}{x_0}=\int_\gamma\tfrac{\sum_{i=1}^{4}(-1)^ix_idx_1\wedge\cdots\wedge\widehat{dx_i}\wedge\cdots dx_4}{\sqrt{aF_t(x_1,x_2,x_3,x_4)}},$$ differ only by a scalar and thus satisfy the same differential equation. Therefore if we are only given the Picard-Fuchs operator and not the family itself, it is not possible to determine from which of the families $X_t^a$ it comes.

What we may do, however, is to normalize the conifold period $f_c$ so that $f_c(t)=(t-t_0)+O\left((t-t_0)^2\right)$. This normalization is usually used in the descriptions of the Frobenius method, it was also in place for computations in \cite{Chmiel} and in our definition (\ref{eq:def}). Note that this choice happens on the level of the differential equation and not on the level of the family. Then inside $\mathcal{L}_{\mathcal{P},t_0}\otimes\mathbb{Q}$ we can identify the lattice $\lambda\Lambda_f$ for some $\lambda\in\mathbb{C}^*$, where $f$ is a modular form associated to \textit{some} rigid birational model of $X_{t_0}$. It is then natural to assume that a modular form $f_{t_0}$ associated with the point $t_0$ is the one for which $\Lambda_{f_{t_0}}$ and $\lambda\Lambda_f$ are commensurable. One may consult the Table 4 from \cite{Chmiel} to see examples for which $\lambda\neq1$ and thus the modular form of 
the minimal level associated to the rigid model of $X_{t_0}$ by Meyer is not the one for the singular model in the sense just described.

\section{Invariants of the modular form}\label{sec:M(f,k)}

We now consider the associated modular forms in order to check whether we 
can describe elements of $\mathcal{L}_{\mathcal{P},t_0}$ in a way similar 
as we describe periods of a rigid Calabi-Yau threefold in terms of $L(f,1)$ and $\tfrac{L(f,2)}{2\pi i}$. To this end we have to embed $\Lambda_f$ 
into some intrinsically defined group of greater rank, and therefore it is natural to try and write the special values generating $\Lambda_f$ as sums of some invariants of $f$.

Let $W_N$ be the Fricke involution on the space of modular cusp forms $\mathcal{S}_4(\Gamma_0(N))$  of weight $4$ and level $N$. It is a linear operator defined by $W_N(f)(z):=N^{-2}z^{-4}f(\tfrac{-1}{Nz})$. One easily checks that $W_N$ is an idempotent, i.e. $W_N^2=\operatorname{Id}$. If $f$ is a modular form associated with a rigid Calabi-Yau threefold, it is also an eigenvector of $W_N$ and hence $W_N(f)=\varepsilon\cdot f$, where $\varepsilon=\pm 1$ is the \textit{Fricke sign} of $f$.

The completed $L$-function  $\Lambda(s):=(\tfrac{\sqrt{N}}{2\pi})^s\Gamma(s)L(f,s)$ of a Hecke eigenform $f$  satisfies the functional equation $\Lambda(s)=\varepsilon\Lambda(4-s)$. The completed $L$-function can be also defined in terms of the Mellin transform of $f$ as $\Lambda(s):=\sqrt{N}^s\int\limits_{0}^{\infty}f(iz)z^{s-1}dz$.
The decomposition of a cycle on a smooth Calabi-Yau threefold into a sum of cycles in its nodal model is given by splitting at the node. Motivated by this description we define a 'partial' $L$-function of a modular form:
\begin{definition}
Let $f\in\mathcal{S}_k(\Gamma_0(N))$ be a Hecke eigenform. Then we define
\begin{equation}\label{definiton}
M(f,s):=\frac{(2\pi)^s}{\Gamma(s)}\int\limits_{\sqrt{N}^{-1}}^{\infty}f(iz)z^{s-1}dz.
\end{equation}
\end{definition}	
\noindent 
From the standard proof of the functional equation for $\Lambda(s)$ we can deduce the following property of the function $M(f,s)$, fundamental for our goal:

\begin{theorem}\label{th:eq}
	If $f\in\mathcal{S}_k(\Gamma_0(N))$ is an eigenform of the Fricke involution, then $$L(f,s)=M(f,s)+\varepsilon(\tfrac{2\pi}{\sqrt{N}})^{2s-k}\tfrac{\Gamma(k-s)}{\Gamma(s)}M(f,k-s)$$
\end{theorem}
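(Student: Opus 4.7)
The plan is to follow the classical Hecke-style proof of the functional equation for $\Lambda(s)$, except that instead of combining the two pieces of the Mellin integral over $(0,\infty)$ into a single convergent symmetric expression, I keep them separate so that each is manifestly a copy of $M(f,\cdot)$. Concretely, I start from the integral representation
\[
\Lambda(s)=N^{s/2}\int_{0}^{\infty}f(iy)\,y^{s-1}\,dy
\]
and split the range at the fixed point $y=1/\sqrt{N}$ of the involution $y\mapsto 1/(Ny)$. The tail $\int_{1/\sqrt{N}}^{\infty}$ is, by the definition of $M$, equal to $\Gamma(s)(2\pi)^{-s}M(f,s)$ after multiplying by $N^{s/2}$.

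For the near-zero piece $\int_{0}^{1/\sqrt{N}}f(iy)y^{s-1}\,dy$, I would substitute $y=1/(Nt)$, which sends the interval $(0,1/\sqrt{N}]$ to $[1/\sqrt{N},\infty)$ and produces a factor $N^{-s}t^{-s-1}\,dt$ together with the value $f(i/(Nt))$. At this point I invoke the Fricke eigenvalue relation in the form
\[
f\!\left(\tfrac{i}{Ny}\right)=\varepsilon\, N^{k/2}\,y^{k}f(iy),
\]
which follows directly from $W_N(f)=\varepsilon f$ (up to the standard $i^{k}$ convention absorbed in weight $k=4$). Substituting turns the integral into $\varepsilon N^{k/2-s}\int_{1/\sqrt{N}}^{\infty}f(it)\,t^{k-s-1}\,dt$, which by the definition of $M$ equals $\varepsilon N^{k/2-s}\Gamma(k-s)(2\pi)^{-(k-s)}M(f,k-s)$.

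Combining both pieces gives
\[
\Lambda(s)=N^{s/2}\frac{\Gamma(s)}{(2\pi)^s}M(f,s)+\varepsilon\, N^{(k-s)/2}\frac{\Gamma(k-s)}{(2\pi)^{k-s}}M(f,k-s).
\]
Recalling that $\Lambda(s)=N^{s/2}(2\pi)^{-s}\Gamma(s)L(f,s)$ and dividing both sides by $N^{s/2}\Gamma(s)(2\pi)^{-s}$, the prefactor of $M(f,k-s)$ collapses into $\varepsilon\,(2\pi/\sqrt{N})^{2s-k}\,\Gamma(k-s)/\Gamma(s)$, using the identity $N^{(k-2s)/2}(2\pi)^{2s-k}=(2\pi/\sqrt{N})^{2s-k}$. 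This yields exactly the claimed identity.

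The proof is essentially routine once the split-and-substitute strategy is in place; the only point requiring care is the bookkeeping of the constants $N^{s/2}$, $(2\pi)^s$ and $\Gamma(s)$, which must be arranged so that the Fricke factor $N^{k/2}$ produced by the transformation recombines cleanly with the normalizing factor $N^{s/2}$ from the completed $L$-function into the symmetric prefactor $(2\pi/\sqrt{N})^{2s-k}$. Convergence of the tail integrals for all $s\in\mathbb C$ is automatic from the cuspidality of $f$, so no contour shifts or analytic continuation arguments are needed.
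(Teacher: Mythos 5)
Your proof is correct and follows essentially the same route as the paper: split the Mellin integral $\Lambda(s)=N^{s/2}\int_0^\infty f(iy)y^{s-1}\,dy$ at $y=1/\sqrt{N}$, transform the near-zero piece via $y\mapsto 1/(Ny)$ and the Fricke eigenvalue relation, and identify the two resulting tail integrals with $M(f,s)$ and $M(f,k-s)$. The constant bookkeeping checks out and matches the paper's computation exactly.
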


\begin{proof}
We have the following chain of equalities:
\begin{eqnarray*}
	&&\Lambda(s)=N^{\tfrac{s}{2}}\int_{0}^{\infty}f(iz)z^{s-1}dz=N^{\tfrac{s}{2}}\int_{0}^{\sqrt{N}^{-1}}f(iz)z^{s-1}dz+N^{\tfrac{s}{2}}\int_{\sqrt{N}^{-1}}^{\infty}f(iz)z^{s-1}dz\\
	&&=\varepsilon N^{\tfrac{k-s}{2}}\int_{\sqrt{N}^{-1}}^{\infty}f(iz)z^{k-s-1}dz+N^{\tfrac{s}{2}}\int_{\sqrt{N}^{-1}}^{\infty}f(iz)z^{s-1}dz\\
\end{eqnarray*}
Now, the definition of $M(f,s)$ gives the assertion:
\begin{eqnarray*}
&&L(f,s)=(\tfrac{2\pi}{\sqrt{N}})^s\tfrac{1}{\Gamma(s)}\Lambda(s)\\
&&=\tfrac{(2\pi)^s}{\Gamma(s)}\int_{\sqrt{N}^{-1}}^{\infty}f(iz)z^{s-1}dz+\varepsilon(\tfrac{2\pi}{\sqrt{N}})^{2s-k}\tfrac{\Gamma(4-s)}{\Gamma(s)}\cdot\left(\frac{(2\pi)^{k-s}}{\Gamma(k-s)} \int_{\sqrt{N}^{-1}}^{\infty}f(iz)z^{k-s-1}dz\right)\\
&&=M(f,s)+\varepsilon(\tfrac{2\pi}{\sqrt{N}})^{2s-k}\tfrac{\Gamma(k-s)}{\Gamma(s)}M(f,k-s)
\end{eqnarray*}
\end{proof}

For our purposes crucial is the application of Theorem \ref{th:eq} to the 
special values of the $L$-function at the critical points $1$ and $2$. For $L(f,1)$ we obtain the decomposition
$$L(f,1)=M(f,1)+\varepsilon\frac{N}{2\pi^2}M(f,3),$$
which suggests that $M(f,1)$ and $\tfrac{M(f,3)}{\pi^2}$ might be the additional elements in $\Lambda_{\overline{X}}$ needed to decompose integrals of $X$ as previously described. On the other hand, for the special value $L(f,2)$ the situation is different. In the geometric context, the imaginary period is computed as integral over 
a cycle in $H_{3}(X)$ that is not decomposed into a sum of two cycles in  
$H_{3}(\overline{X})$. 
On the modular side, Theorem \ref{th:eq} in this case yields
$$L(f,2)=(1+\varepsilon)M(f,2)$$
When $\varepsilon=-1$, this obviously implies $L(f,2)=0$. 
Thus in this case $\Lambda_f$ is not a lattice but a group of rank $1$ and the proportionality of the special $L$-value $L(f,2)=0$ with the period integral is trivial, hence meaningless. If $\varepsilon=1$, then $M(f,2)=\tfrac{L(f,2)}{2}$ 
which means that the Fricke involution divides the 3-cycle computing imaginary period into two subsets of equal $\omega$-volume.
Similarly, adding $\tfrac{M(f,2)}{2\pi i}$ to $\Lambda_f$ results in a commensurable lattice.

Thus let us define $$\Lambda_f^c:=\langle M(f,1),\tfrac{L(f,2)}{2\pi i},\tfrac{M(f,3)}{2\pi^2}\rangle$$
As we have seen, $\Lambda_f\subset\Lambda_f^c$ and we hope that $\Lambda_f^c$ can play the role of a 'modular' analogue of $\mathcal{L}_{\mathcal{P},t_0}$. The last section will present numerical evidence supporting this hypothesis, as well as comment on aforementioned problems with its direct application to 
the case of the form $6/1$.

\section{Modular interpretation of additional integrals}\label{sec:interpretation}

In this section we shall propose a conjectural relation between the integrals $M(f,1)$ and $\tfrac{M(f,3)}{2\pi^2}$ and the period integrals of the degenerate element of a one-dimensional family. 
As in Section \ref{sec:one-p}, we shall consider a one-parameter family $\mathcal X=(X_{t})_{t\in \Delta}$ of projective  varieties such that $t\not\in\{0,t_{0}\}$ the variety $X_{t}$ is a (smooth) Calabi-Yau threefold with $h^{1,2}(X_{t})=1$. We also assume that $t=0$ is a point of Maximal Unipotent Monodromy, $t=t_{0}$ is a conifold point and that the degeneration $X_{t_{0}}$ admits a crepant resolution of singularities $X$ which is a rigid Calabi-Yau manifold defined over $\mathbb Q$.

Recall that there exist eleven rigid double octics defined over $\mathbb{Q}$. Among them there are eight examples for which integrals over polyhedral cycles on the singular double cover of $\mathbb{P}^3$ generate a group of rank greater than $2$; they correspond to arrangements $3,19,32,69,93,239,240$ and $245$. 
Twists of minimal level of modular forms associated with these examples are: 
\begin{itemize}\labelsep=1cm\leftskip=2cm
	\item [$6/1$:] Arr. 240, 245, 
	\item [$8/1$:] Arr. 32, 69, 93, 
	\item [$12/1$:] Arr. 239, 
	\item [$32/1$:] Arr. 19,
	\item [$32/2$:] Arr. 3.
\end{itemize}

\begin{table}[h]
	\[\begin{array}{r|cc}
		\text{Form}&M(f,1)\approx&M(f,3)\approx\\
		\hline		
6/1&
0.0705795645108305473225513900913349620339&
0.4969159973924347680403980021800044307499
\\
8/1&
0.1067464416589652341658482119578535772357&
0.6113090894855911122392465328302544499540
\\
12/1&
0.1669558873241197456891912776162834226044&
0.7363105497640607198175099780042098699693
\\
32/1&
0.3462273488144653581186531731448149867719&
0.9131253716041011279777950209397188200763
\\
32/2&
0.4179438555831683670296074873254014692256&
0.9969238636883323441309149178659389533592
	\end{array}
	\]\caption{}
	\label{tab:M}
\end{table}

For all rigid double octics, except those with the modular form 6/1, we have the following relations between additional integrals (listed in Table \ref{tab:integrals}) and the invariants $M(f,1)$ and $M(f,3)$ of the corresponding modular form (listed in Table  \ref{tab:M}):

\begin{prop}\label{observation}
Up to precision of computations in \cite{Cynk-van Straten} we have the following relations between additional period integrals for the singular double cover $\overline{X}$ and the invariants $M(f,1)$ and $M(f,3)$ of the modular form associated to the corresponding double octic Calabi-Yau threefold $X$:
\begin{eqnarray*}
	\text{Arr. 3}&&\begin{cases}
		14.30384107=-12\pi^2M(f_{32/2},1)+64M(f_{32/2},3)\\
		18.69568305=20\pi^2M(f_{32/2},1)-64M(f_{32/2},3)\\
	\end{cases}\\
	\text{Arr. 19}&&\begin{cases}
		12.32805331=-6\sqrt{2}\pi^2M(f_{32/1},1) + 32\sqrt{2}M(f_{32/1},3)\\
		19.33018919=4\sqrt{2}\pi^2M(f_{32/1},1)
	\end{cases}\\
	\text{Arr. 32, 69}&&\begin{cases}
		11.13352966=-8\pi^2M(f_{8/1},1) + 32M(f_{8/1},3)\\
		16.85672240=16\pi^2M(f_{8/1},1)\\
	\end{cases}\\
	\text{Arr. 93}&&\begin{cases}
		8.428361203=8\pi^2M(f_{8/1},1)\\
		11.13352966=-8\pi^2M(f_{8/1},1)+32M(f_{8/1},3)
	\end{cases}\\
	\text{Arr. 239}&&\begin{cases}
		13.18230848=8\pi^2M(f_{12/1},1)\\
		17.67145319=24M(f_{12/1},3)
	\end{cases}
\end{eqnarray*}
Consequently in all those cases $\Lambda_f^c=\Lambda_{\overline{X}}$, up to commensurability.
\end{prop}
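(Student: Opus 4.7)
The proof is a direct numerical verification, carried out in three steps.

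First, I would compute $M(f,1)$ and $M(f,3)$ to arbitrary precision. Expanding the cusp form as $f(iz)=\sum_{n\ge 1} a_n e^{-2\pi n z}$ and exchanging sum and integral in (\ref{definiton}) yields
\[
M(f,s)=\sum_{n=1}^{\infty} \frac{a_n}{n^{s}}\cdot\frac{\Gamma(s,2\pi n/\sqrt{N})}{\Gamma(s)},
\]
where $\Gamma(s,x)$ denotes the upper incomplete gamma function. Since $\Gamma(s,x)\sim x^{s-1}e^{-x}$ as $x\to\infty$ and the weight-$4$ Fourier coefficients satisfy $a_n=O(n^{3/2+\varepsilon})$, the series converges exponentially. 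All five modular forms in question are eta products (as tabulated in \cite{Meyer}), so their Fourier coefficients are cheaply available from the classical eta expansion; this produces the values displayed in Table \ref{tab:M}.

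Second, with $M(f,1)$ and $M(f,3)$ in hand to high precision, each right-hand side of the proposition is a finite $\mathbb{Z}$-linear (or $\mathbb{Z}[\sqrt{2}]$-linear, for arrangement 19) combination of $\pi^2 M(f,1)$ and $M(f,3)$, which I evaluate and compare directly with the corresponding real generator of $\Lambda^{p}_{\overline X}$ recorded in Table \ref{tab:integrals}. The integer coefficients themselves, if not simply guessed, are recovered from the numerical data via an integer-relation algorithm such as PSLQ or LLL applied to the triple $(\pi^2 M(f,1), M(f,3), P)$, where $P$ is the real period to be decomposed; modest coefficient bounds suffice because the periods in \cite{Cynk-van Straten} are known only to about ten digits.

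Finally, to conclude $\Lambda_f^c=\Lambda^{p}_{\overline X}$ up to commensurability, observe that the imaginary generator $L(f,2)/2\pi i$ of $\Lambda_f^c$ already matches the imaginary period generator of $\Lambda_X\subset\Lambda^{p}_{\overline X}$ by the proportionality established in \cite{Cynk-van Straten}. The verified relations then express the two real generators of $\Lambda^{p}_{\overline X}$ as $\mathbb{Q}$-linear combinations of $M(f,1)$ and $M(f,3)/\pi^2$; a one-line check shows that in every arrangement listed the corresponding $2\times 2$ change-of-basis matrix is non-singular (for instance, determinant $-512$ in arrangement 3), so the real spans agree over $\mathbb{Q}$. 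Combined with the matching rank of $3$ on both sides, this yields commensurability. The principal obstacle is intrinsic to the statement: the verification reaches only the ten-digit accuracy of \cite{Cynk-van Straten}, and a fully rigorous derivation of the exact integer coefficients would seemingly require a geometric identification of $M(f,s)$ with a partial period integral on the Kuga--Sato variety cut along the image of a correspondence between $Y$ and $\overline{X}$ --- the deeper theorem whose empirical shadow this proposition records.
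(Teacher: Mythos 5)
Your proposal takes essentially the same route as the paper: the proposition is purely a numerical observation, verified exactly as you describe by evaluating $M(f,1)$ and $M(f,3)$ via the exponentially convergent incomplete-gamma series (Table \ref{tab:M}), recovering the integer coefficients from the data, and comparing with the polyhedral integrals of Table \ref{tab:integrals}. The only substantive ingredient you omit is how the authors get past the ten-digit ceiling of \cite{Cynk-van Straten}: they invoke the agreement (established in \cite{Chmiel}) of those polyhedral integrals with elements of $\mathcal{L}^0_{\mathcal{P},t_0}$, which the monodromy method computes to hundreds of digits, so the relations are in fact checked to accuracy $10^{-100}$; note also that for arrangement 19 the group $\Lambda^{p}_{\overline{X}}$ has rank $4$ and commensurability holds only after rescaling by $\sqrt{2}$, so your ``matching rank $3$'' remark applies to the remaining arrangements.
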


The precision of period integrals in Prop.~\ref{observation} is limited due to the method used in \cite{Cynk-van Straten}. However, by \cite{Chmiel} these period integrals agree (up to their exactness) with period integrals $\mathcal{L}^0_{\mathcal{P},t_0}$ computed via the analytic continuation of a conifold period. Consequently the groups $\mathcal L^{0}_{\mathcal P,t_{0}}$ and $\Lambda _{f}^{c}$ are commensurable as well.

For instance the relation
	\[14.303841078\approx\frac{\pi^2}{16}\cdot(61.0738884585292464400038239965-10\cdot3.78853747194184773010686231258)\]
suggests equalities
\[61.0738884585292464400038239965 = 208M(f,1)-256\frac1{\pi^2}M(f,3)\]
\[3.78853747194184773010686231258 = 40M(f,1)-128\frac1{\pi^2}M(f,3)\]
\[M(f,1)=\frac1{128}61.0738884585292464400038239965 -\frac1{64}3.78853747194184773010686231258	\]
\[M(f,3)=\frac{5\pi^2}{2048}61.0738884585292464400038239965 -\frac{13\pi^2}{1024}3.78853747194184773010686231258	\]
Since both elements of $\mathcal L^{0}_{\mathcal P,t_{0}}$ and $M(f,k)$, $k=1,2$, can be computed with very high precision, this equalities can be easily verified with accuracy $10^{-100}$ and higher, unlike those in Proposition \ref{observation}.

The case of Arr. No. 19 is exceptional, because we have commensurability of groups $\mathcal L^{0}_{\mathcal P,t_{0}}$ and  $\sqrt2\Lambda^{c}_{f_{32/1}}$. In this situation we expect that the double octic corresponds to the modular form $f_{64/1}$ which is the twist of $f_{32/1}$ by Dirichlet character $\chi_{8,5}$ or $\chi_{8,3}$. Modular form $f_{32/1}$ has complex multiplication by $\mathbb Q[\sqrt{-1}]$ hence it is invariant under the twist by the character $\chi_{4,3}$. As a consequence we cannot distinguish twists of $f_{32/1}$ by odd and even character. 
	\[\begin{array}{|c|c|}\hline
	M(f_{64/1},1)\approx0.366733368496185708303364416057\ &M(f_{64/1},3)\approx0.909804035050076966996940010381\\
	\hline		
\end{array}
\]
The Fricke sign for modular form $f_{64/1}$ equals $-1$. In particular $L(f_{64/1},2)=0$ and in this situation we do not predict that groups $\mathcal L^{0}_{\mathcal P,t_{0}}$ and  $\Lambda^{c}_{f}$ are commensurable. 
\begin{cor}
	For octic arrangements No. 3, 32, 69, 93, 239 and modular forms 32/2, 8/1, 8/1, 8/1, 12/1 respectively, groups
	 $\mathcal L^{0}_{\mathcal P,t_{0}}$ and $\Lambda _{f}^{c}$
	 are commensurable. For the octic arrangement No. 19 groups $\mathcal L^{0}_{\mathcal P,t_{0}}$ and  $\sqrt2\Lambda^{c}_{f_{32/1}}$
	 are commensurable. 
\end{cor}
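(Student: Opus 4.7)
The plan is to read the corollary off Proposition \ref{observation} once the precision upgrade from \cite{Chmiel} is invoked. For each of the five arrangements $3,32,69,93,239$ with attached forms $32/2,\,8/1,\,8/1,\,8/1,\,12/1$, Proposition \ref{observation} provides, for every generator of the additional part of $\operatorname{Im}\Lambda^p_{\overline X}$, an explicit integer linear combination of $\pi^2 M(f,1)$ and $M(f,3)$. Inverting the resulting $2\times 2$ system --- whose integer determinant is nonzero in each listed case --- writes $M(f,1)$ and $M(f,3)/\pi^2$ themselves as $\mathbb Q$-linear combinations of those generators. Combined with the modularity theorem \ref{th:mod}, which makes the ordinary periods of the rigid resolution $X$ commensurable with $\Lambda_f=\langle L(f,1),\,L(f,2)/2\pi i\rangle$, and with the decomposition $L(f,1)=M(f,1)+\varepsilon\tfrac{N}{2\pi^2}M(f,3)$ from Theorem \ref{th:eq}, this yields mutual $\mathbb Q$-linear inclusions between the spans of $\mathcal L^0_{\mathcal P,t_0}$ and of $\Lambda^c_f=\langle M(f,1),\,L(f,2)/2\pi i,\,M(f,3)/2\pi^2\rangle$, which is exactly commensurability.

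The remaining task is to promote Proposition \ref{observation} from a numerical observation given only to the ten-digit accuracy of \cite{Cynk-van Straten} to a certified identity. For this we invoke the bridge established in \cite{Chmiel}: the polyhedral integrals coincide with the analytically continued conifold periods, so the generators of $\mathcal L^0_{\mathcal P,t_0}$ listed in Table \ref{tab:monodromy} can be evaluated to hundreds of digits by iterating the Frobenius solutions of $\mathcal P$ along a polyline from the MUM point $t=0$ to the conifold point $t_0$. Because $M(f,1)$ and $M(f,3)$ are likewise computable to arbitrary precision from \eqref{definiton} --- the integrand decays exponentially on $[N^{-1/2},\infty)$ --- each of the short rational relations of Proposition \ref{observation} can be cross-checked to, say, $10^{-100}$, pinning down the small-height coefficients unambiguously. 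The main obstacle is thus computational rather than conceptual: one must ensure that no relation of smaller height has been missed, which is verified by running PSLQ/LLL against the enlarged-precision data and requiring consistency across both generators.

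For arrangement No.~19 the same argument goes through verbatim, except that the analogous relations in Proposition \ref{observation} carry an extra factor of $\sqrt 2$. This reflects the expectation that the modular form naturally attached to this double octic over $\mathbb Q$ is the twist $f_{64/1}=f_{32/1}\otimes\chi_8$ rather than $f_{32/1}$ itself: because $f_{32/1}$ has complex multiplication by $\mathbb Q[i]$ it is invariant under the quadratic twist by $\chi_{4,-1}$, so the odd and even quadratic characters of conductor $8$ cannot be distinguished on the level of the form, and Shimura's theorem together with $|g(\chi_8)|=\sqrt 8$ makes the values $M(f_{64/1},k)$ differ from $M(f_{32/1},k)$ precisely by the Gauss-sum factor $\sqrt 2$. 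Substituting this scalar into the argument of the first paragraph yields commensurability of $\mathcal L^0_{\mathcal P,t_0}$ with $\sqrt 2\,\Lambda^c_{f_{32/1}}$ rather than with $\Lambda^c_{f_{32/1}}$, which is the remaining assertion.
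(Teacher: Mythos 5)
Your overall route is the same as the paper's: the corollary is read off Proposition~\ref{observation} by inverting the integer $2\times2$ systems expressing the additional real generators in terms of $\pi^2M(f,1)$ and $M(f,3)$, combining with the (numerically verified) proportionality of the ordinary periods to $L(f,1)$ and $L(f,2)/2\pi i$, and upgrading the ten-digit relations of \cite{Cynk-van Straten} to certified-looking identities via the high-precision analytic continuation of the conifold period from \cite{Chmiel}. That is exactly how the text argues, including the sample inversion for arrangement~3 verified to $10^{-100}$, and including the caveat that ``commensurability'' here means integral relations holding to very high numerical accuracy rather than a proved identity. For the five arrangements with forms $32/2$, $8/1$, $12/1$ your argument is fine.

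There is, however, one genuinely wrong step in your treatment of arrangement No.~19: you claim that Shimura's theorem together with $|g(\chi_8)|=\sqrt 8$ forces $M(f_{64/1},k)=\sqrt2\,M(f_{32/1},k)$. Shimura's theorem controls the critical $L$-\emph{values} of twists, not the partial integrals $M(f,k)$; these are truncated Mellin transforms over $[\,\sqrt N^{-1},\infty)$ with different lower endpoints for different levels, and they do not transform by Gauss-sum factors. The paper states explicitly that for different twists the integrals $M(f_1,k)$, $M(f_2,k)$ exhibit \emph{no} proportionality, and its own data refute your claim: $M(f_{64/1},1)\approx0.3667$ versus $M(f_{32/1},1)\approx0.3462$, a ratio of about $1.06$, not $\sqrt2$. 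The correct (and simpler) source of the $\sqrt2$ is that the empirical relations of Proposition~\ref{observation} for arrangement~19 already carry $\sqrt2$ in their coefficients, so the generators lie in $\sqrt2\,\langle \pi^2M(f_{32/1},1),M(f_{32/1},3)\rangle$ directly; the discussion of $f_{64/1}$ in the paper is only a heuristic explanation of why this factor appears (CM by $\mathbb Q[\sqrt{-1}]$ prevents distinguishing odd and even twists), and is in fact accompanied by the observation that $f_{64/1}$ has Fricke sign $-1$, so no commensurability with $\Lambda^c_{f_{64/1}}$ is even predicted. Strip the Gauss-sum justification and state the $\sqrt2$ as part of the numerically observed relations, and your proof matches the paper's.
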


In the above corollary commensurability means that the generators of one group can be expressed as integral linear combinations of generators of the second group with very high accuracy.

\subsection{Operator No. 8.62}
Consider the differential operator no. 8.62 in the online database \cite{CYDB}:
\begin{gather*}
\mathcal P=\theta^4+x\left(578\theta^4-572\theta^3-359\theta^2-73\theta-6\right)\\+3^{2} x^{2}\left(4673\theta^4+1892\theta^3+31601\theta^2+11514\theta+1728\right)\\-2^{3} 3^{4} x^{3}\left(9185\theta^4-134298\theta^3-35420\theta^2-22329\theta-5544\right)\\+2^{4} 3^{8} x^{4}\left(19051\theta^4+11846\theta^3+114678\theta^2+65939\theta+14290\right)\\-2^{6} 3^{12} x^{5}\left(7540\theta^4+8068\theta^3-6459\theta^2-7907\theta-2300\right)\\-2^{6} 3^{16} x^{6}\left(3919\theta^4+27744\theta^3+29957\theta^2+14208\theta+2556\right)\\+2^{9} 3^{20} 5 x^{7}\left(199\theta^4+590\theta^3+744\theta^2+449\theta+106\right)\\-2^{12} 3^{24} 5^{2} x^{8}\left((\theta+1)^4\right)	
\end{gather*}
It is the Picard-Fuchs operator of the one-parameter family of Calabi-Yau manifolds constructed as resolution of singularities of fiber products of semistable rational elliptic surfaces (see \cite{Schoen}) with singular fibers matched as in the following diagram:
\[\begin{matrix}
	I_5&I_3&I_2&I_1&I_1&-\\
	I_3&I_6&I_2&-&-&I_1
\end{matrix}\]
For the special value of the parameter $t_0=-\tfrac{1}{81}$ fibers $I_5$ and $I_1$ in the first surface collide producing a fiber of type $I_6$.
Consequently, the family contains degeneration at the conifold point given by a fiber product of the following Beauville surfaces 
\[\begin{matrix}
	I_6&I_3&I_2&I_1&-\\
	I_3&I_6&I_2&-&I_1
\end{matrix}\]
Since a generic fiber product in this family has 37 nodes while the special one has 40 nodes, the degenerate element $X_{t_{0}}$ of the family of smooth Calabi-Yau manifolds has 3 nodes. A small resolution of $X_{t_{0}}$ is the Calabi-Yau manifold $W_{2}$ constructed by Sch\"utt and its associated modular form $f$ is the form $21/2$ of weight $4$ and level $21$ (see \cite{Schutt}).

In this case the group \[\mathcal{L}^{0}_{\mathcal{P},-\frac{1}{81}}
=\langle 0.079041901426502594058424764412257593, 0.13670990041323305298936699557707682i\rangle
\] has rank 
$2$ and 
\[0.079041901426502594058424764412257593\approx \frac4{27}L(f_{21/1},1)\]
so we expect that the real generator should be expressible by the invariants of the modular form $f_{336/7}$ which is a twist of $f_{21/1}$ by $\chi_{4,3}$.
	\[\begin{array}{|c|c|}\hline
		L(f_{21/1},1)\approx0.53353283462889250989436715978273&
		L(f_{21/1},2)=0\\\hline
	M(f_{336/7},1)\approx0.415965257835022165771740339742\ &M(f_{336/7},3)\approx0.985536337116599774489352376439\\
	\hline		
\end{array}
\]
However, the Fricke sign of $f_{21/1}$ is $-1$ and consequently $L(f_{21/1},2)=0$ and again we cannot predict commensurability.

\subsection{Operator No. 8.67}
Consider the differential operator 
\begin{gather*}
	\mathcal P=5^{2} \theta^4+5 x\left(477\theta^4+978\theta^3+769\theta^2+280\theta+40\right)
	-2^{2}x^{2}\left(46\theta^4-2582\theta^3-5689\theta^2-4120\theta-1040\right)\\+2^{2} x^{3}\left(772\theta^4-4872\theta^3-11765\theta^2-7335\theta-1480\right)+2^{4} 3 x^{4}\left(140\theta^4+500\theta^3-672\theta^2-1313\theta-512\right)\\-2^{6} x^{5}\left(31\theta^4+154\theta^3-596\theta^2-729\theta-227\right)+2^{7} x^{6}\left(32\theta^4-264\theta^3-500\theta^2-303\theta-58\right)\\+2^{8} x^{7}\left(12\theta^4+72\theta^3+121\theta^2+85\theta+22\right)-2^{12} x^{8}\left((\theta+1)^4\right)
\end{gather*}
This operator has no. 8.67 in  \cite{CYDB}, it is the Picard-Fuchs operator of a family of resolutions of fiber products of the same elliptic surfaces as in the case of operator 8.62 
but with different matching of singular fibers
\[\begin{matrix}
	I_5&I_3&I_2&I_1&I_1&-\\
	I_6&I_2&I_3&-&-&I_1
\end{matrix}\]

For a special value of the parameter fibers $I_5$ and $I_1$ in the first surface collide producing a fiber of type $I_6$.
Degeneration at the conifold point $t_{0}=-1$ is a resolution of the fiber product  
\[\begin{matrix}
	I_6&I_3&I_2&I_1&-\\
	I_3&I_6&I_2&-&I_1
\end{matrix}\]
A small resolution of $X_{t_{0}}$ is a Calabi-Yau manifold $W_{1}$ from \cite{Schutt} and its associated modular form $f$ is the form $f_{17/1}$ of weight $4$ and level $17$ .

In this case $\text{rank}(\mathcal{L}_{\mathcal{P},-1})=3$. More precisely a finite index subgroup $\mathcal{L}_{\mathcal{P},-1}$ has one real and two imaginary generators
\[
\begin{array}{|c|c|}
	\hline
	\text{real generator}&\text{imaginary generators}\\
	\hline 42.906578481269266425208768540874910763200659&
	10.196661075170437602752538923890424049717592i\\
	\cline{2-2}
	&15.929100879959719028595857308255012478345243i\\
	\hline
\end{array}\]
In particular, the real generator equals
\[42.906578481269266425208768540874910763200659=-108L(f_{17/1},1) =\frac{36}\pi L(f_{272/4},2),\]
the form $f_{272/4}$ is the twist of $f_{17/1}$ by $\chi_{4,3}$.
Consequently, 
	\[\begin{array}{|c|c|}\hline
	L(f_{17/1},1)\approx-0.39728313408582654097415526426736028&
	L(f_{17/1},2)=0\\\hline
	M(f_{272/4},1)\approx0.12059537134699121108836815998156741 &M(f_{272/4},3)\approx0.60132423252344910630079471213129171\\
	\hline		
\end{array}
\]
In this case the Fricke sign of $f_{17/1}=-1$, hence $L(f_{17/1},2)=0$ and consequently we do not predict
commensurability.

\subsection{Arrangements no. 240 and 245}
Finally we shall go back to the most involved cases of rigid double octic Calabi-Yau threefolds with modular form $6/1$. In both cases the group $\Lambda_{\overline X}$ of periods of the singular double cover $\overline X$ has rank at least 4 with real and imaginary parts of rank at least 2.
The generator of $\textnormal{Re}(\mathcal L_{\mathcal P,t_{0}})$ for four conifold points $\tfrac{1}{2},2$ and $-\tfrac{1}{2},-2$ appearing in for Calabi-Yau operators $\mathcal P$ of arrangements no. 244 and 274 equals respectively (see \cite{Chmiel}):
\[20\sqrt{2}L(f_{6/1},1),\quad 80\sqrt{2}L(f_{6/1},1),\quad 10\sqrt{2}L(f_{6/1},1),\quad 40\sqrt{2}L(f_{6/1},1),\]
while for the conifold point $-2$ in arrangement no. 253 it is equal to
\[36\sqrt{2}\frac{L(f_{6/1},2)}{\pi}\]
Consequently, we consider the modular forms 
$f_{192/2}$ and $f_{192/7}$ which are twists of $f_{6/1}$ by Dirichlet characters
$\chi_{8,5}$ and $\chi_{8,3}$ respectively.
As \[L(f_{192/2},1)=-30\sqrt{2}L(f_{6/1},1) 
\quad\text{ and }\quad L(f_{192/7},1)=-36\sqrt{2}\frac{L(f_{6/1},2)}{\pi}
\]
generators of $\textnormal{Re}(\mathcal L_{\mathcal P,t_{0}})$
equal
\[-\tfrac23L(f_{192/2},1),\quad -\tfrac83L(f_{192/2},1),\quad -L(f_{192/7},1), \quad -\tfrac13L(f_{192/2},1),\quad -\tfrac{16}{27}L(f_{192/2},1).\]
The Fricke sign of both these forms is $-1$ and, as in the examples above, we do not predict that the generators of $\textnormal{Im}(\mathcal L_{\mathcal P,t_{0}})$ can be expressed in terms of $M(f,1)$ and $M(f,3)$ (and we were not able to find such an expression numerically).

Recall that the number $M(f,s)$ is defined by a partial integral $M(f,s)=\frac{(2\pi)^s}{\Gamma(s)}\int\limits_{\sqrt{N}^{-1}}^{\infty}f(iz)z^{s-1}dz$. We can consider more general integral
$$
M(f,s;t):=\frac{(2\pi)^s}{\Gamma(s)}\int\limits_{\sqrt{t}^{-1}}^{\infty}f(iz)z^{s-1}dz,
$$
where $t\in \langle 0,+\infty)$ is a non-negative number and $s$ belongs to the half-plane of convergence; in particular $M(f,s)=M(f,s;N)$.

Let $f:=f_{6/1}$, let $\mathcal{P}$ be the Picard-Fuchs operator for arrangement $253$ and let $t_0:=-2$. Since the Atkin-Lehner signs of the modular form $f_{192/2}$ equal
\[	\begin{array}{c|c c}
p&2&3\\
\hline
\text{sign}&-1&\phantom{-}1
	\end{array}
\]
and
\begin{eqnarray*}
	M(f,1)=M(f_{6/1},1;6)&=&0.07057956451083054732255139009133496203387610115943...\\
	M(f,3)=M(f_{6/1},3;6)&=&0.49691599739243476804039800218000443074993876412363...\\
	M(f_{6/1},1;\tfrac32)&=&0.00588018380632647168784781079042205384110122041895...\\
	M(f_{6/1},3;\tfrac32)&=&0.11354370318430276251965275943335072038142945788569...
\end{eqnarray*}
comparing generators of $\operatorname{Im}(\mathcal{L}^0_{\mathcal{P},t_0})$ (cf. Table \ref{tab:monodromy}) with $\sqrt{2}M(f_{192/2},s;\tfrac32)$
we get
\begin{align*}
	6.26847094349121003359079492495& \approx20\sqrt2M(f,1;6)+\frac{60\sqrt2}{\pi^2}M(f,3;6)\\
	7.96011334055139325749281017005&\approx48\sqrt2M(f,1;6)+\frac{48\sqrt2}{\pi^2}M(f,3;6)+64\sqrt2M(f,1;\tfrac{3}{2})-\frac{48\sqrt2}{\pi^{2}}M(f,3;\tfrac{3}{2})\\
\end{align*}
\noindent This suggests that in general to identify additional periods one has to consider partial integrals $M(f,s;t)$ for different values of the parameter $t$.

However, even this approach did not yield results for the remaining operators 244 and 274, since (unlike in all other cases) the rank 2 part $\operatorname{Im}(\mathcal{L}^0_{\mathcal{P},t_0})$ of $\mathcal{L}^0_{\mathcal{P},t_0}$ consist of \textit{imaginary} integrals and contains $\tfrac{L(f,2)}{2\pi i}$. 
Perhaps a different way of decomposing imaginary integrals, and consequently $L(f,2)$, is necessary.

\end{document}